\documentclass{amsart}

\usepackage{amssymb,amsfonts, amsmath, amsthm}
\usepackage[all,arc]{xy}
\usepackage{enumerate}
\usepackage{mathrsfs}
\usepackage{mathtools}
\usepackage[mathscr]{euscript}
\usepackage{array}
\usepackage{tikz}
\usepackage{tikz-cd}
\usepackage{textcomp}
\usetikzlibrary{decorations.pathmorphing,shapes}
\usepackage[colorlinks=true, linkcolor=blue, citecolor = red]{hyperref}
\usetikzlibrary{patterns}

\usepackage{geometry}

\setlength{\parskip}{0.8em}

\newcommand{\C}{\mathscr{C}}

\newcommand{\E}{\mathscr{E}}

\newcommand{\Y}{\mathscr{Y}}

\newcommand{\U}{\mathscr{U}}

\renewcommand{\Y}{\mathscr{Y}}

\newcommand{\spaces}{\mathscr{S}\text{paces}}

\newcommand{\comma}{,}

\newcommand{\set}{\mathscr{S}\text{et}}

\newcommand{\pbsq}[8]{
  \begin{tikzcd}[row sep=0.5in, column sep=0.5in]
    #1 \arrow[r, "#5"] \arrow[d, "#6"'] \arrow[dr, phantom, "\ulcorner", very near start]
    \pgfmatrixnextcell #2 \arrow[d, "#7"] \\
    #3 \arrow[r, "#8"']
    \pgfmatrixnextcell #4
  \end{tikzcd}
}

\setcounter{section}{-1}

\newtheorem{theone}{Theorem}[section]
\newtheorem{lemone}[theone]{Lemma}


\theoremstyle{definition}
\newtheorem{defone}[theone]{Definition}


\theoremstyle{remark}
\newtheorem{remone}[theone]{Remark}


\makeatletter
\def\@seccntformat#1{%
  \expandafter\ifx\csname c@#1\endcsname\c@section\else
  \csname the#1\endcsname\quad
  \fi}
\makeatother

%
%

\bibliographystyle{plain}

\title{Yoneda Lemma for Elementary Higher Toposes}

\author{Nima Rasekh}

\date{September 2018}

\begin{document}

\begin{abstract}

We prove the Yoneda lemma inside an elementary higher topos, generalizing the Yonda lemma for spaces.

\end{abstract}

\maketitle
\addtocontents{toc}{\protect\setcounter{tocdepth}{1}}

\tableofcontents

 \section{Introduction} \label{Introduction}
  
 In classical category theory the Yoneda lemma states that there is an embedding from each category 
 to the category of set-valued presheaves
 $$\C \to Fun(\C^{op}, \set).$$
 In the world of higher categories this has been generalized to an embedding from an $(\infty,1)$-category 
 to the $(\infty,1)$-category of space-valued presheaves
 $$\C \to Fun(\C^{op}, \spaces)$$
 One particular instance is the case when $\C$ is an $\infty$-groupoid, which is the data of a space. 
 In this case $\C$ is equivalent to $\C^{op}$ and thus we get an embedding
 $$ \C \to Fun(\C, \spaces).$$
 Thus we can always embed a space in the $(\infty,1)$-category of space valued functors.
 \par 
 An elementary higher topos is an $(\infty,1)$-category that shares many characteristics with the category of spaces.
 The goal of this paper is to show that we can use those properties to show that the Yoneda embedding for spaces 
 also holds in an elementary higher topos.

%
 

 
\section{The Yoneda Lemma for Elementary Toposes}
 Before we tackle the higher categorical setting we review the $1$-categorical case first.
 In order to make things concrete we will start with sets first.
 \par 
 Let $S$ be a set. 
 The diagonal map $\Delta: S \to S \times S$ is always a monomorphism of sets.
 We can think of this map as the reflexive relation. In other words 
 $$ (x,y) \in \Delta(S) \Leftrightarrow x = y$$
 As it is a subset it has a characteristic function $\chi_{\Delta}: S \times S \to \{ 0,1 \}$ and pullback square
 \begin{center}
  \pbsq{S}{ \{ 1 \} }{S \times S}{ \{ 0 \comma 1 \} }{}{\Delta}{1}{\chi_{\Delta}}
 \end{center}
 The map $\chi_{\Delta}$ sends a pair $(x,y)$ to $1$ if and only if $x = y$.
 This characteristic map has an adjoint 
 $$ S \to \{ 0,1 \}^S$$
 In order to characterize this map we first realize that $\{ 0,1 \}^S$ is the powerset of $S$, $PS$.
 Thus we are looking for a map from $S$ to its powerset.
 From this perspective the map can be characterized as the map that sends an element $s \in S$ to the subset $\{ s \} \subset S$.
 We will thus denote this map by 
 $$\{ \cdot \}_S : S \to PS$$
 This map is itself a monomorphism. There is a categorical way to think about this monomorphism.
 If $\C$ is a $1$-category then by the Yoneda lemma there is a embedding from $\C$ into the category of presheaves
 $$ \Y : \C \to Fun(\C^{op}, \set)$$
 If $\C$ is a set (a category with only identity maps) then we still have the Yoneda embedding,
 however every representable presheaf takes values in the subcategory $ \{ \emptyset , \{ 1 \} \} $.
 Indeed for any two objects $c,d$ in $\C$ we either have $Hom_\C(c,d) = \emptyset$ or $Hom_\C(c,d) = \{ 1 \}$.
 Moreover, also notice that if $\C$ is a set then $\C^{op} = \C$
 \par 
 Combining these two facts we get an embedding of categories 
 $$ \Y : \C \to Fun( \C, \{ \emptyset , 1 \} )$$
 However, at this stage we are dealing with sets and the set of maps from $\C$ to $\{ \emptyset , 1 \}$ is just the powerset.
 Recall that $\Y (c): \C \to \{ \emptyset , 1 \}$ is the map that takes $c$ to $1$ and everything else to $\emptyset$.
 Thus the corresponding powerset is the subset $\{ c \}$.
 \par 
 What we have shown in the previous paragraph is that the Yoneda embedding $\Y$ is equal to the 
 singleton map $\{ \cdot \}_\C$. 
 The most interesting aspect of this result is that it can be generalized to an elementary topos,
 which is a $1$-category that shares characteristics with the category of sets.
 \par 
 Here we will only give the definition of an elementary topos and refer to \cite{MM92} or \cite[Subsection 1.2]{Ra18c} for more details.
 \begin{defone}
  An elementary topos $\E$ is a $1$-category which is locally Cartesian closed and has a subobject classifier $\Omega$.
 \end{defone}
 As before, for any object $A$ in $\E$, the diagonal map $A \to A \times A$ is a monomorphism.
 The subobject classifier $\Omega$ characterizes all monomorphisms (which is the role $\{ 0,1 \}$ played in sets).
 Thus we have a pullback square in $\E$.
 \begin{center}
  \pbsq{A}{1}{A \times A}{\Omega}{}{\Delta}{t}{\chi_\Delta}
 \end{center}
 Using adjunctions we get the map, which we will also denote by $\{ \cdot \}_A$. 
 $$\{ \cdot \}_A : A \to PA $$
 The axioms of an elementary topos allow us to generalize the result above thusly.
 
 \begin{lemone} \label{Lemma Yoneda for ET}
  \cite[Lemma IV.1.1]{MM92}
  Let $\E$ be an elementary topos and $A$ an object. Then the map 
  $$\{ \cdot \}_A : A \to PA $$
  is a monomorphism in $\E$.
 \end{lemone}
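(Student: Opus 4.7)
The plan is to verify that $\{\cdot\}_A$ is monic by testing it on generalized elements: given two maps $f, g : X \to A$ with $\{\cdot\}_A \circ f = \{\cdot\}_A \circ g$, I aim to conclude $f = g$. The crucial translation is that, by its very construction, $\{\cdot\}_A$ is the exponential transpose of $\chi_\Delta$ under the adjunction $(-) \times A \dashv (-)^A$. Applying this adjunction to the hypothesis produces the equality
$$\chi_\Delta \circ (f \times 1_A) \;=\; \chi_\Delta \circ (g \times 1_A)$$
of maps $X \times A \to \Omega$.

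At this point I would invoke the defining property of the subobject classifier: two arrows into $\Omega$ are equal if and only if they classify the same subobject. By the pullback pasting lemma applied to the classifying square for $\Delta$, this forces the subobjects of $X \times A$ obtained by pulling back $\Delta : A \to A \times A$ along $f \times 1_A$ and along $g \times 1_A$ to represent the same element of $\mathrm{Sub}(X \times A)$.

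The next step, which I regard as the main but still routine verification, is to identify this subobject explicitly. The pullback of $\Delta$ along $f \times 1_A$ is the ``graph'' of $f$, represented by the section $(1_X, f) : X \hookrightarrow X \times A$; that this is a monomorphism is immediate since $\pi_1$ retracts it. The analogous computation for $g$ produces $(1_X, g)$. Hence the hypothesis reduces to $(1_X, f)$ and $(1_X, g)$ representing the same subobject of $X \times A$.

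Finally, this means there exists an isomorphism $\phi : X \to X$ satisfying $(1_X, g) \circ \phi = (1_X, f)$. Composing with $\pi_1$ immediately forces $\phi = 1_X$, and then composing with $\pi_2$ yields $f = g$, completing the argument.
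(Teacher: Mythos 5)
Your argument is correct and is essentially the standard proof cited by the paper (Mac Lane--Moerdijk, Lemma IV.1.1): transpose the hypothesis, use that maps to $\Omega$ are determined by the subobjects they classify, identify those subobjects as the graphs $(1_X,f)$ and $(1_X,g)$, and recover $f=g$ by projecting. The paper itself gives no independent proof, so there is nothing further to compare.
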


 The goal for the coming sections is two-fold. First we review the Yoneda lemma for spaces 
 similar to the case for sets. Then we prove that it generalizes to elementary higher toposes 
 the same way the case for sets generalized to elementary toposes.
 
\section{The Yoneda Lemma for Spaces}
 
 In this section we want to see how we can witness the Yoneda lemma in the context of spaces. 
 In this case the diagonal map is replaced by the {\it path fibration}:
 $$p : X^{\Delta[1]} \xrightarrow{ \ \ (s,t)  \ \ } X \times X$$
 In order to understand it we have to gain a better understanding of spaces first.
 \begin{remone} \label{Rem U in Sp}
  Let $\spaces$ be the category of small spaces. Then we denote the maximal subgroupoid as $\spaces^{core}$ and notice that it
 is itself a space. We define $\spaces_*$ as the category of pointed small spaces and recall that there is a forgetful functor 
 $U: \spaces_* \to \spaces$. This induces a map of (large) spaces $U^{core}: (\spaces_*)^{core} \to \spaces$.
 One beautiful result about spaces is that this the map $U^{core}$ classifies other maps.
 Concretely for every space $X$ we have an equivalence of spaces with small fiber over $X$ and maps $X$ into $\spaces$
 $$(\spaces^{small}_{/X})^{core} \simeq map(X,\spaces^{core})$$
 where the equivalence is given by pulling back along $\U^{core}$. 
 In order to check that a map over $p: Y \to X$ is characterized by a certain morphism $f:X \to \spaces^{core}$ it suffices to show 
 that the fiber of $p$ over each point is equivalent to the value of $f$ at that point.
 \end{remone}
 We can define a map 
 $$Path(-,-): X \times X \to \spaces^{core}$$
 which sends each pair of points $(x,y)$ to the space of paths that start at $x$ and end with $y$, $Path_X(x,y)$.
 This maps gives us a pullback square:
 \begin{center}
  \pbsq{X^{\Delta[1]}}{\spaces_*^{core}}{X \times X}{\spaces^{core}}{}{(s \comma t)}{U^{core}}{Path_X(- \comma -)}
 \end{center}
 Indeed this follows from the fact that for every two points $x,y$ in $X$ we have a pullback square
 \begin{center}
  \pbsq{Path_X(x,y)}{X^{\Delta[1]}}{*}{X \times X}{}{}{(s \comma t)}{(x \comma y)}
 \end{center}
 In light of the remark above what we are saying is that path fibration is classified by the map $Path(-,-)$.
 \par 
 The map $Path_X(-,-): X \times X \to \spaces^{core}$ has an adjoint 
 $$Path_X(,-):X \to (\spaces^{core})^X$$
 We want to prove that this map is an embedding of spaces using the tools of algebraic topology. 
 All we have to do is to show that for any two objects $x,y$ the map of spaces 
 $$ Path_X(x,y) \to Path_{(\spaces^{core})^X}(Path_X(,x), Path_X(,y))$$
 is an equivalence (Notice the similarity to the Yoneda lemma that we will later come back to).
 \par 
 In order to do that we first use the remark above to characterize the map $Path_X(,x) : X \to \spaces^{core}$
 as a map over $X$. 
 We can see that we have the following pullback diagram 
 \begin{center}
  \pbsq{Path_X(x)}{(\spaces_*)^{core}}{X}{\spaces^{core}}{}{s_x}{U^{core}}{}
 \end{center}
 where $Path_X(x)$ is the space of paths in $X$ that end at $x$.
 Indeed the value at each point $y$ and the fiber over $y$ are both the space $Path_X(y,x)$ 
 (here we again used Remark \ref{Rem U in Sp}).
 \par 
 Thus $Path_X(,x)$ and $Path(,y)$ are equivalent to the maps of spaces from $Path_X(x)$ to $Path_X(y)$ over $X$.
 That is a commutative diagram of the following form.
 \begin{center}
  \begin{tikzcd}[row sep=0.5in, column sep=0.5in]
   Path_X(x) \arrow[dr, "s_x"'] \arrow[rr, dashed] & & Path_X(y) \arrow[dl, "s_y"] \\
   & X & 
  \end{tikzcd}
 \end{center}

 So, our original question is equivalent to showing that 
 $$ Path_X(x,y) \to Map_X(Path_X(x), Path_X(y))$$
 is an equivalence of spaces. 
 \par 
 Here we can use the power of algebraic topology. The space $Path_X(x)$ is contractible with contraction point 
 the identity path at $x$ and so any such commutative triangle above is equivalent to a commutative triangle of the form
 \begin{center}
  \begin{tikzcd}[row sep=0.5in, column sep=0.5in]
   * \arrow[dr, "x"'] \arrow[rr, dashed] & & Path_X(y) \arrow[dl, "s_y"] \\
   & X & 
  \end{tikzcd}
 \end{center}
 But the space of such diagrams is just the fiber of $s_y: Path_X(y) \to X$ over $x$, which is by definition $Path_X(x,y)$. 
 Hence we have shown that the desired map is an embedding.
 \par 
 This same result can also be witnessed form the perspective of the Yoneda lemma in higher category theory.
 If $\C$ is a higher category ( $(\infty,1)$-category), then by the Yoneda lemma we have a diagram 
 $$\Y: \C \to Fun(\C^{op}, \spaces)$$
 If $\C$ is a groupoid then it is just a space and we can simplify the diagram to 
 $$\Y: \C \to Fun(\C, \spaces)$$
 However, if $\C$ is a space then any functor $\C \to \spaces$ will factor through the maximal subgroupoid 
 $\C \to \spaces^{core} \hookrightarrow \spaces$.
 Thus we can simplify the map above to 
 $$\Y: \C \to Fun(\C, \spaces^{core}) = (\spaces^{core})^\C$$
 By the Yoneda lemma this map is also an embedding. The image of a point $x$ in $\C$ is the representable functor 
 $\Y_x: \C \to \spaces^{core}$ that takes a point $y$ to the space $map_{\C}(y,x)$. 
 However, $\C$ is a just a space and so $map_{\C}(y,x)$ is by definition $Path_\C(y,x)$. 
 So, the Yoneda embedding then corresponds to the result we stated in the previous paragraph.
 
 \begin{remone}
  It is instructive to see how the case for spaces differs from the case of sets.
  Let $X$ be a $0$-type, by which which we mean a space which has no non-trivial homotopy groups above level $0$.
  Alternatively we can characterize it as a disjoint union of contractible spaces.
  In this case the path space $Path(x,y)$ for any two points is either empty (if they are not equivalent) or contractible 
  (if they are equivalent). Then the path fibration is precisely the diagonal map we used in the 
  preious section, which implies that the case for sets is a special case of spaces.
 \end{remone}

 \section{The Yoneda Lemma for Elementary Higher Toposes}
 We now want to generalize the Yoneda lemma for spaces to the setting of an elementary higher topos.
 First we have following definition.
 
 \begin{defone} \cite[Definition 3.5]{Ra18c} \cite[Theorem 3.16]{Ra18c}
  An elementary higher topos $\E$ is an $(\infty,1)$-category that has finite limits and colimits, a subobject classifier 
  and every map $f:X \to Y$ is classified by a universe $\U$, in the sense that there is a pullback 
  \begin{center}
   \pbsq{X}{Y}{\U_*}{\U}{}{}{}{}
  \end{center}
 \end{defone}

 We want to repeat the steps we took in the previous section. The main problem is that $X^{\Delta[1]} \to X \times X$ does not exists 
 in an arbitrary topos as $\Delta[1]$ does not exists. However, notice that the map $X^{\Delta[1]} \to X \times X$ is 
 equivalent to the diagonal map $X \to X \times X$. As all constructions in a higher category are homotopy invariant
 we can thus just work with the diagonal map.
 
 \begin{remone}
  If the statement above is true then why did we choose the path fibration in the previous section?
  The path fibration map is a Kan fibration and so is normally used in algebraic topology 
  as it allows us to use the classical definition of a pullback.
  In an arbitrary elementary higher topos, however, there is no notion of fibration and so we have to work with the diagonal map.
 \end{remone}
 
 Having decided that we want to use the diagonal map, we now have to characterize it. 
 Here is where we use the fact that elementary higher toposes have universes. By assumption there is a universe $\U$ such that 
 there is following pullback diagram.
 \begin{center}
  \pbsq{X}{\U_*}{X \times X}{\U}{}{\Delta}{}{}
 \end{center}
 That is exactly the role played by $\spaces^{core}$ in the case of spaces.
 The map 
 $$X \times X \to \U$$
 has an adjoint (as $\E$ is locally Cartesian closed)
 $$X \to \U^X.$$
 We are finally in a position to state and prove the Yoneda lemma.
 
 \begin{theone} \label{The Yoneda Lemma}
  Let $X$ be an object in $\E$ and let $\U$ be a universe that classifies the diagonal map $\Delta: X \to X \times X$
  in the sense that we have following pullback square
  \begin{center}
   \pbsq{X}{\U_*}{X \times X}{\U}{}{\Delta}{}{}
  \end{center}
  Then the adjoint map 
  $$\Y_X : X \to \U^X$$
  is an embedding in $\E$.
 \end{theone}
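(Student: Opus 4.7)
The plan is to mirror the argument for spaces from the previous section. We will show that for every object $c \in \E$ and every pair of generalized elements $f, g : c \to X$, the induced map on mapping spaces
$$\map_{\map(c, X)}(f, g) \longrightarrow \map_{\map(c, \U^X)}(\Y_X \circ f, \Y_X \circ g)$$
is an equivalence; equivalently, the diagonal $X \to X \times_{\U^X} X$ is an equivalence. First we identify the representables through the universe: because $\U^X$ is defined by the exponential adjunction and $\U$ is a universe, a morphism $c \to \U^X$ is the data of a small object over $c \times X$, and a homotopy between two such morphisms is the data of an equivalence of the classified objects over $c \times X$. For $f : c \to X$, the composite $\Y_X \circ f$ is adjoint to $\chi_\Delta \circ (f \times \text{id}_X) : c \times X \to \U$, which classifies the pullback $E_f$ of $\Delta$ along $f \times \text{id}_X$. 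The right-hand side of the comparison therefore becomes the space of equivalences $E_f \simeq E_g$ over $c \times X$.

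The key step is the elementary higher topos analog of the contractibility of $Path_X(x)$: we claim there is a pullback square
\begin{center}
\pbsq{c}{X}{c \times X}{X \times X}{f}{(\text{id}_c \comma f)}{\Delta}{f \times \text{id}_X}
\end{center}
so that $E_f$ is canonically equivalent to $c$ with structure map $(\text{id}_c, f)$. Commutativity is immediate since $(f \times \text{id}_X) \circ (\text{id}_c, f) = (f, f) = \Delta \circ f$, and the pullback property follows by pasting using that $\pi_2 : X \times X \to X$ retracts $\Delta$. Granting this, an equivalence $E_f \simeq E_g$ over $c \times X$ becomes a self-equivalence $\phi$ of $c$ together with a homotopy $(\text{id}_c, g) \circ \phi \simeq (\text{id}_c, f)$ in $\map(c, c \times X) \simeq \map(c, c) \times \map(c, X)$. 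Since the space of pairs $(\phi, \phi \simeq \text{id}_c)$ is contractible, this reduces to the space of homotopies $f \simeq g$ in $\map(c, X)$, which is precisely the left-hand side of the desired comparison; one then verifies that the Yoneda map realizes this equivalence.

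The main obstacle is the identification $E_f \simeq c$, which categorifies the contractibility of the path space. Beyond this single pullback verification, everything is formal: the universe classification turns the pullback defining $X \times_{\U^X} X$ into a statement about equivalences between classified objects over $c \times X$, and the $\infty$-groupoidal structure of the mapping spaces absorbs the extra homotopy data into a contractible factor.
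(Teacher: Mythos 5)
Your proof is correct in substance and rests on the same two pillars as the one in the paper: the classification $\map_\E(c \times X, \U) \simeq (\E^S_{/c \times X})^{core}$, under which homotopies of classifying maps become equivalences of the classified objects over $c \times X$, and the pullback computation identifying $E_f$ (the pullback of $\Delta$ along $f \times id_X$) with $c$ itself, sitting over $c \times X$ via $(id_c, f)$. What you do differently is the packaging of ``embedding'': you check full faithfulness on path spaces for each pair $f, g \colon c \to X$, whereas the paper keeps the whole mapping space $\map_\E(c,X) \to (\E^S_{/c \times X})^{core}$, characterizes its essential image as those $p \colon E \to c \times X$ admitting an equivalence $E \simeq c$ over $c \times X$, and then shows the corestriction is an equivalence using the projection $\map_\E(c, c \times X)_{id_c} \to \map_\E(c,X)$ and $2$-out-of-$3$. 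Your version makes the analogy with the contractibility of $Path_X(x)$ from the spaces section more transparent; the paper's version has the advantage that the final coherence check comes for free from $2$-out-of-$3$, whereas in yours the sentence ``one then verifies that the Yoneda map realizes this equivalence'' is a genuine remaining step: you must check that the composite from the path space of $\map_\E(c,X)$ between $f$ and $g$ to the space of equivalences $E_f \simeq E_g$ over $c \times X$ and back is homotopic to the identity once $E_f$ and $E_g$ are trivialized. This is the analogue of the paper's observation that $\pi \circ \varphi \simeq id$; it is routine but should be written out.

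One small correction to the key step: the projection that makes the pasting argument work is $\pi_1 \colon X \times X \to X$, not $\pi_2$. The square
\begin{center}
 \pbsq{c \times X}{X \times X}{c}{X}{f \times id_X}{\pi_1}{\pi_1}{f}
\end{center}
is a pullback, and the composite rectangle obtained by stacking it under your square has vertical maps $\pi_1 \circ (id_c, f) = id_c$ and $\pi_1 \circ \Delta = id_X$, hence is also a pullback; by pasting, the top square is a pullback. With $\pi_2$ the analogous bottom square ($\pi_2 \colon c \times X \to X$ over $\pi_2 \colon X \times X \to X$ with $id_X$ on the bottom) is \emph{not} a pullback, so that version of the pasting does not go through. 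This is a cosmetic fix and does not affect the architecture of your argument.
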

 
 \begin{proof}
  In order to prove $\Y_X$ is an embedding we have to show that for every object $Z$ the induced map
  $$ map_\E(Z, X) \to map_\E(Z, \U^X ) $$
  is an embedding of spaces. By adjointness this is equivalent to the map 
  $$ map_\E(Z,X) \to map_\E(Z \times X, \U)$$
  being an equivalence.
  \par 
  Let $S$ be the class of maps classified by the universe $\U$. Then we have an equivalence of spaces 
  $$map(Z \times X, \U) \xrightarrow{ \ \ \simeq \ \ } (\E_{/Z \times X}^S)^{core}.$$
  Composing with the original map means that it suffices to prove that the map 
  $$ \varphi: map_\E(Z,X) \to (\E_{/Z \times X}^S)^{core}$$
  is an embedding. Notice for $f:Z \to X$ the image of $\varphi(f)$ is the map $(id_Z,f): Z \to Z \times X$.
  We will do this in two steps. First we determine the points in $(\E_{/Z \times X}^S)^{core}$
  for which the fiber of the map is non-empty. Then we show that $map_\E(Z,X)$ is equivalent to the 
  image we characterized in the previous step.
  \par 
   For the first step we will prove that a map $p:E \to Z \times X$ has a non-trivial fiber if and only if 
   there exists a map $g: Z \to X$ and equivalence over $Z \times X$
     \begin{center}
   \begin{tikzcd}[row sep=0.5in, column sep=0.5in]
    E \arrow[rr, " \simeq"] \arrow[dr, "p"'] & & Z \arrow[dl, "( id_Z \comma g )"] \\
    & Z \times X
   \end{tikzcd}
  \end{center}
  Let $p: E \to Z \times X$ be a point in $(\E_{/Z \times X}^S)^{core}$.
  The fiber over this point is the subspace of $map_\E(Z,X)$ generated by maps $g: Z \to X$ such that 
  the following is a pullback square in $\E$.
  \begin{center}
   \begin{tikzcd}[row sep=0.5in, column sep=0.5in]
    E \arrow[d, "p"] \arrow[r] \arrow[dr, phantom, "\ulcorner", very near start] & X \arrow[d, "\Delta"] \\
    Z \times X \arrow[r, "g \times id_X"] & X \times X 
   \end{tikzcd}
  \end{center}
  We can extend the diagram.
  \begin{center}
   \begin{tikzcd}[row sep=0.5in, column sep=0.5in]
    E \arrow[d, "p"'] \arrow[r] \arrow[dr, phantom, "\ulcorner", very near start] & X \arrow[d, "\Delta"] \\
    Z \times X \arrow[r, "g \times id_X"] \arrow[d, "\pi_1"'] \arrow[dr, phantom, "\ulcorner", very near start] 
    & X \times X \arrow[d, "\pi_1"] \\
    Z \arrow[r, "g"] & X
   \end{tikzcd}
  \end{center}
  As the bottom square is always a pullback square, the top one is a pullback square 
  if and only if the rectangle is a pullback square. However, $\pi_1 \circ \Delta = id_X : X \to X$ and so 
  the pullback $\pi_1 \circ g : E \to Z$ is an equivalence. This prove that one side of our condition.
  \par 
  For the other side let $g: Z \to X$ be a map. We will show the map $(id_Z,g): Z \to Z \times X$ has 
  a non-trivial fiber. First of all we can construct the pullback square
  \begin{center}
   \begin{tikzcd}[row sep=0.5in, column sep=0.5in]
    Z \arrow[d, "(id_Z \comma g)"'] \arrow[r, "g"] \arrow[dr, phantom, "\ulcorner", very near start] & X \arrow[d, "\Delta"] \\
    Z \times X \arrow[r, "g \times id_X"] & X \times X 
   \end{tikzcd}
  \end{center}
  Moreover, the map $(id_Z , g): E \to Z \times X$  is in $S$
  as it is the pullback of $\Delta_X: X \to X \times X$ which is in $S$ by assumption and $S$ is closed under pullback.
  We have just shown that $p: E \to Z \times X$ has a non-trivial fiber if and only if there exists a map $g: Z \to X$ and an equivalence 
  $ E \to Z$ over $Z \times X$.
  \par 
  Now we need to show that $map(Z,X)$ is equivalent to the image.
  The map 
  $$map(Z \times X, \U) \xrightarrow{ \ \ \simeq \ \ } (\E_{/Z \times X}^S)^{core}$$
  has a Kan fibration over the space $(\E^S)^{core}$. We can take the fiber over the point $Z: * \to (\E^S)^{core}$. 
  This gives us an equivalence
  $$map(Z \times X, \U)_{Z} \xrightarrow{ \ \ \simeq \ \ } map^S_\E(Z, Z \times X).$$
  Here $map(Z \times X, \U)_{Z}$ is the subspace of $map(Z \times X, \U)$ generated by point $g:Z \times X \to \U$
  such that $g^*(\U^*) \simeq Z$. Also, the space $map^S_\E(Z, Z \times X)$ is the subspace of $map_\E(Z, Z \times X)$
  generated by maps $f: Z \to Z \times X$ such that $f \in S$.
  In the previous paragraph we proved that the map $ \varphi: map_\E(Z,X) \to (\E_{/Z \times X}^S)^{core}$ will factor 
  through $map^S_\E(Z, Z \times X)$ and so it suffices to prove that 
  $$ \varphi: map_\E(Z,X) \to map^S_\E(Z, Z \times X)$$
  is an embedding.
  \par 
  We can actually do one more restriction. Let $map_\E(Z, Z \times X)_{id_Z}$ consist of the following path-components in 
  $map^S_\E(Z, Z \times X)$. A map $p: Z \to Z \times X$ is in $map_\E(Z, Z \times X)_{id_Z}$ if and only if $p \simeq (id_Z,g)$
  where $g: Z \to X$. Then the inclusion map from $map_\E(Z, Z \times X)_{id_Z}$ to $map^S_\E(Z, Z \times X)$ is an embedding.
  Moreover, the map $\varphi$ will factor again to a map $\varphi: map_\E(Z,X) \to map_\E(Z, Z \times X)_{id_Z}$.
  In order to finish the proof it suffices to show that this map is an equivalence.
  \par 
  Before we do that we will make one side note about $map_\E(Z, Z \times X)_{id_Z}$. Although it is a subspace of 
  $map^S_\E(Z, Z \times X)$, which is restricted by $S$, we did not include $S$ in the definition of  $map_\E(Z, Z \times X)_{id_Z}$.
  That is because {\it every} map in  $map_\E(Z, Z \times X)_{id_Z}$ is automatically in $S$ as it is the pullback of 
  $\Delta: X \to X \times X$ by the map $g \times id_X: Z \times X \to X \times X$.
  Thus $map_\E(Z, Z \times X)_{id_Z}$ is not restricted or related to $S$ in any way anymore.
  \par 
  Coming back to the proof, until now we constructed a map 
  $$\varphi: map_\E(Z,X) \to map_\E(Z,Z \times X)_{id_Z}$$
  and we want to prove that this map is an equivalence.
  Note that $map_\E(Z,Z \times X)_{id_Z}$ fits into the following homotopy pullback square.
  \begin{center}
   \pbsq{map_\E(Z, Z \times X)_{id_Z}}{map_\E(Z, Z \times X)}{\{ id_Z \} \times map_\E(Z,X)}{map_\E(Z,Z) \times map_\E(Z,X)}{}{\pi}{\simeq}{}
  \end{center}
  By the property of products the right hand map is an equivalence. This implies that we have an equivalence 
  $$\pi: map_\E(Z, Z \times X)_{id_Z} \xrightarrow{ \ \ \simeq \ \ } map_\E(Z,X)$$
  We can think of $\pi$ as the map that takes a map $(id_Z,g): Z \to Z \times X$ to the map $g: Z \to X$.
  Thus the map $\pi \varphi: map_\E(Z,X) \to map_\E(Z,X)$ is an equivalence. By $2$ out of $3$ we get that $\varphi$ is also an equivalence.
 \end{proof}

 \section{Application of the Yoneda Lemma}
 
 In the last section we look at couple applications of the Yoneda lemma in an elementary topos.
 \par 
 Let $X$ be a $0$-truncated object in an elementary higher topos $\E$. Then the map $X \to X \times X$ is an
 embedding in $\E$.Thus the map $X \times X \to \U$ stated in Theorem \ref{The Yoneda Lemma} factors through
 $X \times X \to \Omega$, where $\Omega$ is the subobject classifier in $\E$.
 This gives us a pullback square 
 \begin{center}
  \pbsq{X}{1}{X \times X}{\Omega}{}{}{}{}
 \end{center}
 Then Theorem \ref{The Yoneda Lemma} states that the map 
 $$X \to \Omega^X$$
 is an embedding. Thus this results recovers the case for elementary toposes as stated in Lemma \ref{Lemma Yoneda for ET}.
 \par 
 Let $X$ be a $-1$-truncated object in $\E$, meaning that the diagonal $\Delta: X \to X \times X$ is an equivalence.
 This means we have an equivalence
 $$map_\E(X , \U^X) \simeq map_\E( X \times X, \U ) \simeq map_\E (X , \U).$$
 The Yoneda map in $map_\E(X, \U^X)$ corresponds to the map $X \to \U$ classifying $id_X : X \to X$.
 Indeed we have the diagram of pullbacks
 \begin{center}
  \begin{tikzcd}[row sep=0.5in, column sep=0.5in]
   X \arrow[d, "id_X"] \arrow[r] \arrow[dr, phantom, "\ulcorner", very near start] 
   & X \arrow[d, "\Delta"] \arrow[r] \arrow[dr, phantom, "\ulcorner", very near start] 
   & \U_*\arrow[d] \\
   X \arrow[r] & X \times X \arrow[r] & \U 
  \end{tikzcd}
 \end{center}
 The Yoneda lemma now implies that the map $X \to \U$ classifying $id_X$ is an embedding.
 By \cite[Theorem 3.28]{Ra18c} this means $id_X$ is univalent.
 This gives us an alternative proof that the map is $id_X: X \to X$ is univalent if $X$ is $-1$-truncated.
 For the original argument see \cite[Example 6.25]{Ra18b}.


\begin{thebibliography}{9}
 
 
%
   
   
%
 
 
 
 
 
 
 
  

 
 
  
 
 
 

  \bibitem[MM92]{MM92}
  S. Mac Lane, I. Moerdijk. 
  {\it Sheaves in Geometry and Logic}, 
  Springer-Verlag, New York (1992).
 
 
 
 
  \bibitem[Ra18a]{Ra18a}
  N. Rasekh, 
  {\it An Introduction to Complete Segal Spaces}. 
  arXiv preprint arXiv:1805.03131 (2018)

  \bibitem[Ra18b]{Ra18b}
  N. Rasekh, 
  {\it Complete Segal Objects}. 
  arXiv preprint arXiv:1805.03561 (2018).
  
 \bibitem[Ra18c]{Ra18c}
 N. Rasekh, 
 {\it A Theory of Elementary Higher Toposes}. 
 arXiv preprint arXiv:1805.03805 (2018)
 
 \bibitem[Re01]{Re01}
 C. Rezk, 
 {\it A model for the homotopy theory of homotopy theory}, 
 Trans. Amer. Math.Soc., 353(2001), no. 3, 973-1007.
 
 

 
 

 


\end{thebibliography}
\end{document}